\title{Linear versus set valued  Kronecker representations}
\author{Kay Gro{\ss}blotekamp}
\author{Henning Krause}
\theoremstyle{plain}
\newtheorem{thm}{Theorem}[section]
\newtheorem{prop}[thm]{Proposition}
\newtheorem{lem}[thm]{Lemma} 
\newtheorem{cor}[thm]{Corollary}
\theoremstyle{definition}
\newtheorem{exm}[thm]{Example}
\theoremstyle{remark}
\numberwithin{equation}{section}
\newcommand{\card}{\operatorname{card}}
\newcommand{\Hom}{\operatorname{Hom}}
\newcommand{\Ring}{\operatorname{\mathsf{Ring}}}
\newcommand{\set}{\operatorname{\mathsf{set}}}
\newcommand{\Set}{\operatorname{\mathsf{Set}}}
\renewcommand{\Vec}{\operatorname{\mathsf{Vec}}}
\renewcommand{\vec}{\operatorname{\mathsf{vec}}}
\newcommand{\op}{\mathrm{op}}
\newcommand{\lto}{\longrightarrow}
\newcommand{\smatrix}[1]{\left[\begin{smallmatrix}#1\end{smallmatrix}\right]}
\newcommand{\xto}{\xrightarrow}
\def\A{\mathsf A} 
\def\C{\mathsf C}
\def\D{\mathsf D}
\def\K{\mathsf K}
\def\bbF{\mathbb F}
\def\Ga{\Gamma}
\begin{document}

\begin{abstract}
  A set valued representation of the Kronecker quiver is nothing but a
  quiver.  We apply the forgetful functor from vector spaces to sets
  and compare linear with set valued representations of the Kronecker
  quiver.
\end{abstract}

\keywords{Kronecker quiver, Kronecker representation}
\subjclass[2010]{16G20 (primary); 05C20 (secondary)}

\date{June 5, 2016}

\maketitle

\section{Introduction}

We consider the Kronecker quiver
\[\begin{tikzcd}
\K_2&\circ\arrow[yshift=0.75ex]{r} \arrow[yshift=-0.75ex]{r}&\circ
\end{tikzcd}\]
and study its representations. Given any category $\C$, a
Kronecker representation  in $\C$ is by definition a pair of parallel
morphisms 
\[\begin{tikzcd}
X_1\arrow[yshift=0.75ex]{r}{f} \arrow[yshift=-0.75ex,swap]{r}{g}& X_0
\end{tikzcd}\]
in $\C$. We may view $\K_2$ as a category with two objects, two
identical morphisms, and two parallel morphisms that connect both
objects. Then a Kronecker representation is nothing but a functor
$\K_2\to\C$ and the morphisms between representations are by
definition the natural transformations. We denote the category of
Kronecker representations by $\C^{\K_2}$. There are two natural
examples.

\subsection*{Set valued Kronecker representations}
Take for $\C$ the category $\Set$ of sets. Then a Kronecker
representation is just a quiver, because a quiver is a quadruple
$\Ga=(\Ga_0,\Ga_1,s,t)$ consisting of a set of vertices $\Ga_0$, a set
of arrows $\Ga_1$ and two maps $s,t\colon \Ga_1\to\Ga_0$ that assign
to each arrow its start and its terminus \cite{Ga1972}. Thus
$\Set^{\K_2}$ identifies with the category of quivers.

\subsection*{Linear Kronecker representations} 
Fix a field $k$ and consider for $\C$ the category
$\Vec_k$ of vector spaces over $k$. Then the Kronecker representations
are pairs of $k$-linear maps, and these have been studied by Leopold
Kronecker \cite{Kr1890}.

\subsection*{The forgetful functor and its adjoint}

Any functor $F\colon\C\to\D$ induces a functor
$F^{\K_2}\colon\C^{\K_2}\to\D^{\K_2}$ by composing representations in
$\C$ with $F$.

There is an adjoint pair of functors
\[|-|\colon\Vec_k\lto\Set\qquad\text{and}\qquad k^{(-)}\colon \Set\lto
\Vec_k.\]
The first one is the forgetful functor and the second one is its left
adjoint which takes a set $X$ to the vector space $ k^{(X)}$ of maps
$f\colon X\to k$ such $f(x)=0$ for all but a finite number of elements
$x\in X$. Composing representations with these functors yields another
pair of adjoint functors
\[|-|^{\K_2}\colon\Vec_k^{\K_2}\lto\Set^{\K_2}\qquad\text{and}\qquad
{k^{(-)}}^{\K_2}\colon \Set^{\K_2}\lto \Vec_k^{\K_2}\]
but we simplify (and abuse) notation by writing $|-|$ instead of
$|-|^{\K_2}$ and similarly $k^{(-)}$ instead of ${k^{(-)}}^{\K_2}$.

The subject of this work is the study of the forgetful functor and its
left adjoint between the categories of linear and set valued Kronecker
representations. We restrict ourselves to representations in the
category of finite dimensional vector spaces and the category of
finite sets.

\section{Linear representations}

Fix a field $k$ and let $\vec_k$ denote the category of finite
dimensional vector spaces over $k$. For a vector space $V$ set
$V^*:=\Hom_k(V,k)$.

We consider the category $\vec_k^{\K_2}$ of $k$-linear Kronecker
representations. From the Krull-Remak-Schmidt theorem it follows that
each representation decomposes essentially uniquely into a finite
direct sum of indecomposable representations. 

It is well known that each indecomposable representation is either
\emph{preprojective}, \emph{preinjective}, or \emph{regular}.
Following an idea of A.~Hubery, we describe these as follows.  

For each integer $n\ge 0$ let $V_n$ denote the $n+1$-dimensional space
of homogeneous polynomials of degree $n$ in two variables $x$ and $y$
of degree $1$.  It is convenient to set
$V_{-1}=0$. Thus \[k[x,y]=\bigoplus_{n\ge 0}V_n.\]

The indecomposable preprojective representations are of the form
\begin{align*}
P(n)\quad& \begin{tikzcd}[ampersand replacement=\&] V_{n-1}\arrow[yshift=0.75ex]{r}{x} \arrow[yshift=-0.75ex,
swap]{r}{y}\& V_{n}
\end{tikzcd}& (n&\ge 0)
\intertext{and the  indecomposable preinjective representations are the dual
representations}
I(n)\quad &\begin{tikzcd}[ampersand replacement=\&]  V_{n}^*\arrow[yshift=0.75ex]{r}{x^*}
\arrow[yshift=-0.75ex, swap]{r}{y^*}\& V_{n-1}^* 
\end{tikzcd}& (n&\ge 0).  \intertext{Each $0\neq f\in V_{n}$ gives
  rise to a regular representation}
R(f)\quad&\begin{tikzcd}[ampersand replacement=\&]
    V_{n-1}\arrow[yshift=0.75ex]{r}{x} \arrow[yshift=-0.75ex,
    swap]{r}{y}\& V_{n}/\langle f\rangle
\end{tikzcd}
\end{align*}
where $\langle f\rangle$ is the $k$-linear subspace generated by
$f$. Note that $R(fg)\cong R(f)\oplus R(g)$ when $f$ and $g$ are
coprime.  If $f=p^d$ for some irreducible polynomial $p$ and an
integer $d\ge 1$, then $R(f)$ is indecomposable, and all
indecomposable regular representations are of this form.

\section{The forgetful functor}

Let $\set$ denote the category of finite sets. We fix a finite field
$k$ and consider the forgetful functor
$|-|\colon\vec_k^{\K_2}\to\set^{\K_2}$. It takes a linear
representation
\[\begin{tikzcd}
X_1\arrow[yshift=0.75ex]{r}{f} \arrow[yshift=-0.75ex,swap]{r}{g}& X_0
\end{tikzcd}\]
to the set valued representation
\[\begin{tikzcd}
{| X_1 |} \arrow[yshift=0.75ex]{r}{|f|} \arrow[yshift=-0.75ex,swap]{r}{|g|}& {|X_0|}.
\end{tikzcd}\]
This functor preserves products since it is a right adjoint. Thus it
suffices to describe its action on the indecomposable linear
representations.

Let us introduce two types of quivers that are relevant, namely
\emph{linear} and  \emph{cyclic} quivers:
\[\begin{tikzcd}
\mathsf A_r&1 \arrow[r]&2 \arrow[r]&\cdots \arrow[r]&r-1 \arrow[r]&r \\
\mathsf C_r&1 \arrow[r]&2 \arrow[r]&\cdots \arrow[r]&r-1 \arrow[r]&r
\arrow[bend left=15]{llll}
\end{tikzcd}\]

\subsection*{Preprojective representations} 

Fix an integer $n\ge 0$ and $f\in V_n$. We set 
\[d_x(f):= \max\{r\ge 0\mid x^r\text{ divides }f\}\quad\text{and}\quad
d_y(f):= \max\{r\ge 0\mid y^r\text{ divides }f\}.
\]
If $f=0$ or $d_y(f)=0$, then we define a quiver $\Ga(f)$ as follows. For
$f=0$ set
\[\begin{tikzcd}
\Ga(f) &0 \ar[loop right]{r}{0}
\end{tikzcd}\]
and for $d_y(f)=0$ set $d:=d_x(f)$ and
\[\begin{tikzcd}
  \Ga(f)&f \arrow[rr, "x^{-1}f"]&& x^{-1}fy \arrow[rr, "x^{-2}fy"]&&
  \cdots \arrow[rr, "x^{-d}fy^{d-1}"] &&x^{-d}fy^{d}.
\end{tikzcd}\] 
Note that $\Ga(f)$ is  a linear quiver with $d_x(f)+1$ vertices when $f\neq 0$.

\begin{prop}
  The quiver $|P(n)|$ equals the disjoint union of the quivers $\Ga(f)$
  where $f\in V_n$ such that $f=0$ or $d_y(f)=0$.
\end{prop}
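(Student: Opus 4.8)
The plan is to compute the quiver $|P(n)|$ directly from the definition of the forgetful functor and then match it, vertex by vertex and arrow by arrow, against the asserted disjoint union. Unravelling definitions, $|P(n)|$ is the quiver with vertex set $V_n$ (the underlying set of the vector space), arrow set $V_{n-1}$, and whose two structural maps --- the source and the target --- send an arrow $a\in V_{n-1}$ to $xa\in V_n$ and to $ya\in V_n$ respectively; so to describe $|P(n)|$ is to describe the pair $(xa,ya)$ for every $a\in V_{n-1}$. On the other side, for $f\ne 0$ with $d_y(f)=0$ and $d:=d_x(f)$ the quiver $\Ga(f)$ has vertices $x^{-i}fy^i$ for $0\le i\le d$ and arrows $x^{-i}fy^{i-1}$ for $1\le i\le d$, the arrow $x^{-i}fy^{i-1}$ running from $x^{-(i-1)}fy^{i-1}$ to $x^{-i}fy^i$; and $\Ga(0)$ is a single loop at the vertex $0$.

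Next I would observe that on both sides an arrow recovers its two endpoints by multiplication by $x$ and by $y$: for the arrow $a=x^{-i}fy^{i-1}$ of $\Ga(f)$ one computes $xa=x^{-(i-1)}fy^{i-1}$ and $ya=x^{-i}fy^i$, which are exactly its endpoints inside $\Ga(f)$, and for $a=0\in V_{n-1}$ one has $x\cdot 0=y\cdot 0=0$, the loop at $0\in V_n$ appearing in $\Ga(0)$. Consequently the proposition reduces to the purely combinatorial assertion that, as $f$ runs through the elements of $V_n$ with $f=0$ or $d_y(f)=0$, the vertex sets of the quivers $\Ga(f)$ partition $V_n$ and their arrow sets partition $V_{n-1}$.

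For the vertices I would use the unique factorization of a nonzero $h\in V_n$ as $h=x^{d_x(h)}y^{d_y(h)}u$ with $u$ coprime to $xy$. Then $h$ occurs as the vertex in position $i=d_y(h)$ of $\Ga(f)$ for $f:=x^{d_x(h)+d_y(h)}u$, which satisfies $d_y(f)=0$ and $d_x(f)=d_x(h)+d_y(h)\ge i$; and this is the only possibility, since $h=x^{-i}fy^i$ with $d_y(f)=0$ forces $d_y(h)=i$ and hence $f=x^iy^{-i}h$. In other words $h\mapsto(f,i)$ and $(f,i)\mapsto x^{-i}fy^i$ are mutually inverse bijections between $V_n\setminus\{0\}$ and the set of pairs $(f,i)$ with $f\in V_n$, $d_y(f)=0$, $0\le i\le d_x(f)$. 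The argument for arrows is identical up to a shift by one: a nonzero $a\in V_{n-1}$, factored as $a=x^{d_x(a)}y^{d_y(a)}v$ with $v$ coprime to $xy$, is the arrow in position $i=d_y(a)+1$ of $\Ga(f)$ for $f:=x^{d_x(a)+d_y(a)+1}v$, uniquely so by the same valuation argument, while $0\in V_{n-1}$ is the loop of $\Ga(0)$ and $0\in V_n$ is its vertex. Assembling the vertex and arrow bijections with the endpoint check of the previous paragraph yields the equality $|P(n)|=\bigsqcup_f\Ga(f)$.

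I do not anticipate a genuine obstacle: the argument is bookkeeping with the two divisibility functions $d_x$ and $d_y$. The only points needing a little care are checking that the index $i$ produced above lies in the admissible range --- immediate from $d_x(f)=d_x(h)+d_y(h)$, respectively $d_x(f)=d_x(a)+d_y(a)+1$, together with the nonnegativity of $d_x$ and $d_y$ --- and observing that each expression $x^{-i}f$ written down really is a polynomial, which holds precisely because $i\le d_x(f)$.
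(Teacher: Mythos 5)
Your argument is correct, and it is complete; it just takes a different route from the paper. The paper argues structurally: since multiplication by $x$ and by $y$ gives injective maps $V_{n-1}\to V_n$, every vertex of $|P(n)|$ has at most one incoming and at most one outgoing arrow, so each connected component is a linear or cyclic quiver, and one then identifies each component by its distinguished vertex $f$ with $f=0$ or $d_y(f)=0$ (the verification of this last step is left as ``easily checked''). You instead bypass any analysis of connected components and prove the equality of quivers directly: you read off that in $|P(n)|$ an arrow $a\in V_{n-1}$ runs from $xa$ to $ya$, and then use the factorization $h=x^{d_x(h)}y^{d_y(h)}u$ with $u$ coprime to $xy$ to set up explicit mutually inverse bijections between $V_n\setminus\{0\}$ (resp.\ $V_{n-1}\setminus\{0\}$) and the vertex (resp.\ arrow) sets of the quivers $\Ga(f)$ with $f\neq 0$, $d_y(f)=0$, with $0$ accounted for by $\Ga(0)$; the compatibility with source and target is the computation $x\cdot x^{-i}fy^{i-1}=x^{-(i-1)}fy^{i-1}$, $y\cdot x^{-i}fy^{i-1}=x^{-i}fy^{i}$. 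What the paper's route buys is brevity and a structural observation (in/out-degree at most one) that is reused for the regular representations; what your route buys is that the partition statement --- that the $\Ga(f)$ are pairwise disjoint and exhaust all vertices and arrows --- is actually proved rather than asserted, and the injectivity of multiplication by $x$ and $y$ is never needed. Both proofs are valid; yours makes explicit exactly the step the paper leaves to the reader.
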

\begin{proof}
  At each vertex there is at most one arrow starting and at most one
  arrow ending, since multiplication with $x$ and $y$ provides
  injective maps $V_{n-1}\to V_{n}$. Thus each connected component of
  $|P(n)|$ is either linear or cyclic, and it is easily checked that
  each component is identified by the vertex $f\in V_n$ satisfying
  $f=0$ or $d_y(f)=0$.
\end{proof}

Let $(A,a)$ be a pointed set and $n\ge 0$. We define a quiver
$\Ga((A,a),n)$ with set of vertices $A^{n+1}$ and set of arrows
$A^{n}$ such that the start and terminus of an arrow
$\mathbf a=(a_1,a_2,\ldots,a_n)$ are given as follows:
\[\begin{tikzcd}
(a,a_1,\ldots,a_{n})\arrow[r,"\mathbf a"]&(a_1,\ldots,a_n,a)
\end{tikzcd}\]

\begin{cor}
View the field $k$ as  pointed set $(k,0)$. Then the quiver
$|P(n)|$ identifies with $\Ga((k,0),n)$.\qed
\end{cor}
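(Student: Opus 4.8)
The plan is to exhibit an explicit isomorphism of quivers $|P(n)|\xrightarrow{\sim}\Ga((k,0),n)$ and check it is compatible with source and target maps. Recall that $P(n)$ has vertex sets $V_{n-1}$ (the arrows) and $V_n$ (the vertices of the quiver), with the two structure maps being multiplication by $x$ and by $y$. So as a set valued representation, $|P(n)|$ is the quiver whose vertex set is $V_n$, whose arrow set is $V_{n-1}$, and where an element $g\in V_{n-1}$ has source $xg\in V_n$ and terminus $yg\in V_n$.

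The key step is to choose coordinates. A homogeneous polynomial of degree $n$ in $x,y$ is determined by its coefficient vector: write $h=\sum_{i=0}^{n}c_i\,x^{n-i}y^{i}$ and send $h\mapsto (c_0,c_1,\ldots,c_n)\in k^{n+1}$. This is a $k$-linear isomorphism $V_n\xrightarrow{\sim}k^{n+1}$, hence in particular a bijection of sets, and it identifies the vertex set of $|P(n)|$ with $A^{n+1}$ for $A=(k,0)$. Similarly, a degree $n-1$ polynomial $g=\sum_{j=0}^{n-1}a_{j+1}\,x^{n-1-j}y^{j}$ is sent to $(a_1,\ldots,a_n)\in k^n$, identifying the arrow set of $|P(n)|$ with $A^n$. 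First I would record these two bijections and then compute what multiplication by $x$ and by $y$ does in these coordinates: $xg=\sum_{j=0}^{n-1}a_{j+1}x^{n-j}y^{j}$ has coefficient vector $(a_1,\ldots,a_n,0)$, and $yg=\sum_{j=0}^{n-1}a_{j+1}x^{n-1-j}y^{j+1}$ has coefficient vector $(0,a_1,\ldots,a_n)$.

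Thus, under the chosen identifications, the arrow corresponding to $\mathbf a=(a_1,\ldots,a_n)$ has source $(a_1,\ldots,a_n,0)$ and terminus $(0,a_1,\ldots,a_n)$, which is exactly the source $(a,a_1,\ldots,a_n)$ and terminus $(a_1,\ldots,a_n,a)$ prescribed in the definition of $\Ga((A,a),n)$ with basepoint $a=0$. This matches the source and target maps on the nose, so the pair of bijections $(V_n\cong A^{n+1},\,V_{n-1}\cong A^n)$ is an isomorphism of set valued Kronecker representations, i.e.\ an isomorphism of quivers $|P(n)|\cong\Ga((k,0),n)$. I do not expect a genuine obstacle here; the only point requiring a little care is bookkeeping with the index conventions, namely that the same monomial-ordering must be used consistently for $V_n$ and for $V_{n-1}$ so that multiplication by $x$ appends a zero on the right and multiplication by $y$ prepends a zero on the left (or vice versa, depending on the convention chosen in the paper for $\Ga(f)$). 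Once the conventions are fixed, the verification is immediate.
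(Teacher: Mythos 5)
Your strategy---put coordinates on $V_n$ and $V_{n-1}$ and check compatibility with the two structure maps---is exactly the intended argument (the paper treats the corollary as immediate from its description of $|P(n)|$), but the final matching step is wrong as written. With your ordering $h=\sum_{i=0}^{n}c_i\,x^{n-i}y^{i}\mapsto(c_0,\ldots,c_n)$ you correctly compute that the arrow $g\mapsto(a_1,\ldots,a_n)$ has source $xg\mapsto(a_1,\ldots,a_n,0)$ and terminus $yg\mapsto(0,a_1,\ldots,a_n)$. But in $\Ga((k,0),n)$ the arrow $\mathbf a=(a_1,\ldots,a_n)$ starts at $(0,a_1,\ldots,a_n)$ and ends at $(a_1,\ldots,a_n,0)$, so your identification produces the quiver with all arrows reversed relative to the definition, and the sentence claiming the source and target maps ``match on the nose'' is false: $(a_1,\ldots,a_n,0)\neq(0,a_1,\ldots,a_n)$ in general. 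The paper's example over $k=\bbF_2$, $n=2$ pins down the orientation: the arrow $(1,1)$ goes from $(0,1,1)$ to $(1,1,0)$, which forces multiplication by $x$ to prepend a $0$ and multiplication by $y$ to append a $0$ in the chosen coordinates---the opposite of what your convention gives.

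The repair is the one you yourself anticipated in your closing remark: order coefficients by ascending powers of $x$, i.e.\ send $h=\sum_{i=0}^{n}c_i\,x^{i}y^{n-i}$ to $(c_0,\ldots,c_n)$ and $g=\sum_{j=0}^{n-1}a_{j+1}\,x^{j}y^{n-1-j}$ to $(a_1,\ldots,a_n)$; then $xg\mapsto(0,a_1,\ldots,a_n)$ and $yg\mapsto(a_1,\ldots,a_n,0)$, which is literally the prescribed source and terminus of $\mathbf a$ in $\Ga((k,0),n)$. Alternatively, keep your convention and compose with the order-reversing bijections $(c_0,\ldots,c_n)\mapsto(c_n,\ldots,c_0)$ on vertices and $(a_1,\ldots,a_n)\mapsto(a_n,\ldots,a_1)$ on arrows, which carries your quiver isomorphically onto $\Ga((k,0),n)$. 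Either way the corollary follows; the point is that the orientation check has to be carried out against the paper's fixed definition of $\Ga((A,a),n)$, not waved through.
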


\begin{exm}
Let $k=\bbF_2$. Then $|P(2)|$ has $3$ connected components.
\[
\begin{tikzcd}
(0,0,0) \ar[loop right]{r}{(0,0)}&&
{(0,1,1)}\arrow[r,"{(1,1)}"]&{(1,1,0)}\\
{(0,0,1)}\arrow[r,"{(0,1)}"]&{(0,1,0)}\arrow[r,"{(1,0)}"]&{(1,0,0)}
\end{tikzcd}
\]
\end{exm}

\subsection*{Preinjective representations} 

Fix an integer $n \ge 0$. Following \cite{dB1946}, the \emph{de Bruijn
  graph} of dimension $n$ on a set $A$ is the quiver with set of
vertices $A^n$ and set of arrows $A^{n+1}$ such that the start and
terminus of an arrow $\mathbf a=(a_0,a_1,\ldots,a_n)$ are given as
follows:
\[\begin{tikzcd}
(a_0,\ldots,a_{n-1})\arrow[r,"\mathbf a"]&(a_1,\ldots,a_n)
\end{tikzcd}\]

\begin{prop}
  The quiver $|I(n)|$ identifies with the de Bruijn graph of dimension
  $n$ on the set $k$.
\end{prop}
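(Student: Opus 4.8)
The plan is to compute the set valued representation $|I(n)|$ directly from the description $I(n) = (V_n^* \xrightarrow{x^*,\,y^*} V_{n-1}^*)$ and match it with the de Bruijn graph. First I would fix a convenient basis. The monomials $x^i y^{n-i}$ for $0 \le i \le n$ form a basis of $V_n$; let $(e_i)_{0\le i\le n}$ denote the dual basis of $V_n^*$, and similarly let $(\bar e_j)_{0\le j\le n-1}$ be the basis of $V_{n-1}^*$ dual to the monomial basis of $V_{n-1}$. The maps $x^* , y^* \colon V_n^* \to V_{n-1}^*$ are the transposes of multiplication by $x$ and by $y$. Since multiplication by $x$ sends $x^i y^{n-1-i} \mapsto x^{i+1}y^{n-1-i}$ and by $y$ sends $x^i y^{n-1-i}\mapsto x^i y^{n-i}$, one reads off on dual bases that $x^*(e_i) = \bar e_{i-1}$ (with $\bar e_{-1} := 0$) and $y^*(e_i) = \bar e_i$ (with $\bar e_n := 0$, i.e.\ $y^*(e_n)=0$). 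In other words, a vector $v = \sum_{i=0}^n \la_i e_i \in V_n^*$ has $x^*(v) = \sum_{j=0}^{n-1}\la_{j+1}\bar e_j$ and $y^*(v) = \sum_{j=0}^{n-1}\la_j \bar e_j$.

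Next I would set up the identification of underlying sets. The vertex set $|V_n^*|$ is identified with $k^{n+1}$ via coordinates $(\la_0,\dots,\la_n)$, and the edge set $|V_{n-1}^*|$ with $k^n$ via $(\m_0,\dots,\m_{n-1})$. Under the de Bruijn conventions in the statement I should instead index the edge set (dimension $n$, arrows $A^{n+1}$) as the vertex set and vice versa — so I would match $|I(n)|$ by declaring: vertices of $|I(n)|$, i.e.\ elements of $k^{n+1}=|V_n^*|$, are the \emph{arrows} $\mathbf a=(a_0,\dots,a_n)$ of the de Bruijn graph, and edges of $|I(n)|$, i.e.\ elements of $k^{n}=|V_{n-1}^*|$, are its \emph{vertices} $A^n$. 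Wait — that is backwards; the cleaner match is that the de Bruijn graph on $A=k$ of dimension $n$ has $|V_{n-1}^*|$ as neither, so one must be careful: the de Bruijn graph of dimension $n$ has vertex set $k^n$ and arrow set $k^{n+1}$, while $I(n)$ has source $V_n^*$ ($\dim n+1$) and target $V_{n-1}^*$ ($\dim n$). Since a set valued Kronecker representation is an arrow set mapping to a vertex set, $I(n)$ has arrow set $|V_n^*|=k^{n+1}$ and vertex set $|V_{n-1}^*|=k^n$, which matches the de Bruijn graph exactly on the nose.

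So the core of the proof is just to verify the source and target maps agree. Identify an arrow $\mathbf a=(a_0,a_1,\dots,a_n)\in k^{n+1}$ with $v=\sum \la_i e_i$ via $\la_i := a_i$. Then by the computation above its terminus is $y^*(v)=(\la_0,\dots,\la_{n-1})=(a_0,\dots,a_{n-1})$ and its start is $x^*(v)=(\la_1,\dots,\la_n)=(a_1,\dots,a_n)$ — which are exactly the start $(a_0,\dots,a_{n-1})$ and terminus $(a_1,\dots,a_n)$ prescribed for the de Bruijn graph (after choosing which of $x^*,y^*$ plays the role of $s$ and which of $t$; the asymmetry is harmless and fixed by convention). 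Functoriality in $k$ is automatic. The main thing to get right — really the only subtlety — is the bookkeeping: which dual basis vectors get killed by $x^*$ versus $y^*$, and lining up the two ``shift'' maps so that one drops the last coordinate and the other drops the first, matching the de Bruijn incidence. I would state this as a short lemma computing $x^*$ and $y^*$ on the dual basis, and then the proposition is an immediate re-indexing. I expect no genuine obstacle beyond this bookkeeping; the result is essentially dual to the Corollary identifying $|P(n)|$ with $\Ga((k,0),n)$, and one could alternatively deduce it by dualizing, noting $I(n)=P(n)^*$ in the appropriate sense, though the direct computation is cleaner to write.
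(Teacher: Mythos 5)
Your proposal is correct and follows essentially the same route as the paper: compute $x^*$ and $y^*$ on the dual monomial basis, identify $|V_n^*|\cong k^{n+1}$ and $|V_{n-1}^*|\cong k^n$, and observe that the two maps become the two coordinate shifts defining the de Bruijn incidence. The only cosmetic difference is your choice to index coordinates by the power of $x$, which interchanges the roles of start and terminus; the paper indexes by the power of $y$ (via $(a_0,\ldots,a_n)\mapsto\sum_{i+j=n}a_j\xi^i\upsilon^j$), making the identification literal, and your flip is indeed harmless since reversing tuples gives an isomorphism of the de Bruijn graph with its opposite.
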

\begin{proof}
The identification goes as follows. We use dual basis elements
$\xi^i\upsilon^{j}$ in $V_{n}^*$ corresponding to
$x^i y^{j}$ in $V_{n}$, and we identify 
\[k^{n+1}\stackrel{\sim}\lto V_{n}^*,\qquad
(a_0,a_1,\ldots,a_n)\mapsto\sum_{i+j=n} a_j\xi^i\upsilon^{j}.\]
The map $x^*\colon V_{n}^*\to V_{n-1}^*$ sends
$\sum_{i+j=n} a_j\xi^i\upsilon^{j}$ to
$\sum_{i+j=n} a_j\xi^{i-1}\upsilon^{j}$, while $y^*$ sends
$\sum_{i+j=n} a_j\xi^i\upsilon^{j}$ to
$\sum_{i+j=n} a_j\xi^{i}\upsilon^{j-1}$, where $\xi^{-1}=0=\upsilon^{-1}$.
\end{proof}

\begin{exm}
Let $k=\bbF_2$. Then $|I(2)|$ is the de Bruijn graph  of dimension $2$
on a set with $2$ elements.
\[\begin{tikzcd}
{}&(0,1)\arrow{rd}{(0,1,1)}\arrow[xshift=-0.75ex,swap]{dd}{(0,1,0)}\\
(0,0) \ar[loop left]{l}{(0,0,0)}\arrow{ru}{(0,0,1)}&&
(1,1) \ar[loop right]{r}{(1,1,1)}\arrow{ld}{(1,1,0)}\\
{}&(1,0) \arrow{lu}{(1,0,0)}\arrow[xshift=0.75ex,swap]{uu}{(1,0,1)}
\end{tikzcd}\]
\end{exm}

\subsection*{Regular representations} 

We fix $0\neq f\in V_{n}$ and consider the corresponding regular representation
\[\begin{tikzcd}
R(f)&V_{n-1}\arrow[yshift=0.75ex]{r}{x} \arrow[yshift=-0.75ex,
swap]{r}{y}& V_{n}/\langle f\rangle 
\end{tikzcd}\]
Suppose that $R(f)$ is indecomposable. Then one of the maps given by
multiplication with $x$ and $y$ is bijective. First we consider the
case that both maps are bijective.

\begin{lem}\label{le:cyclic}
Let $\Ga=(\Ga_0,\Ga_1,s,t)$ be a finite quiver. Then the maps $s$ and $t$
are bijective if and only if $\Ga$ is a disjoint union of cyclic quivers.
\end{lem}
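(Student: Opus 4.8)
The plan is to prove both implications directly from the combinatorial structure of a quiver in which every vertex has exactly one incoming and exactly one outgoing arrow. The key observation is that $s$ and $t$ being bijective means precisely this: for each vertex $v$ there is a unique arrow with $t(\text{arrow})=v$ (bijectivity of $t$) and a unique arrow with $s(\text{arrow})=v$ (bijectivity of $s$), and moreover the arrow set and vertex set have the same (finite) cardinality.

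For the ``if'' direction, observe that a cyclic quiver $\C_r$ has both $s$ and $t$ bijective by inspection, and these properties are clearly preserved under disjoint unions; so any disjoint union of cyclic quivers has $s,t$ bijective. For the ``only if'' direction, I would argue as follows. Since $t$ is bijective, the composite $\sigma:=s\comp t^{-1}\colon\Ga_0\to\Ga_0$ is a bijection of the finite set $\Ga_0$; concretely, $\sigma(v)$ is the source of the unique arrow ending at $v$. Because $\Ga_0$ is finite, $\sigma$ decomposes into disjoint cycles, i.e.\ $\Ga_0$ is partitioned into $\sigma$-orbits, each of the form $\{v,\sigma v,\sigma^2 v,\ldots,\sigma^{r-1}v\}$ with $\sigma^r v=v$. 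Each such orbit, together with the arrows $t^{-1}(v),t^{-1}(\sigma v),\ldots$ connecting consecutive elements, is visibly a copy of the cyclic quiver $\C_r$, and since $t$ is a bijection every arrow of $\Ga$ arises exactly once in exactly one of these orbit-subquivers. Hence $\Ga$ is the disjoint union of these cyclic quivers.

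I would then just need to check that the subquivers attached to distinct $\sigma$-orbits share neither vertices (true, since orbits partition $\Ga_0$) nor arrows (true, since $t$ is injective, so the arrows $t^{-1}(v)$ for $v$ ranging over $\Ga_0$ are pairwise distinct and exhaust $\Ga_1$), which is what ``disjoint union'' demands. The main obstacle — and it is a minor one — is being careful that finiteness is genuinely used: for an infinite quiver, $s$ and $t$ bijective would also permit a copy of the quiver $\bbZ$ (a bi-infinite line), which is not a disjoint union of cyclic quivers; finiteness of $\Ga_0$ is exactly what forces every $\sigma$-orbit to be finite and hence cyclic. Everything else is routine bookkeeping.
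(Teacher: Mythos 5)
Your proof is correct and follows essentially the same route as the paper, whose proof consists only of the observation that bijectivity of $s$ and $t$ means each vertex has exactly one arrow starting and one ending there. You simply make explicit what the paper leaves implicit, namely that (by finiteness) the induced permutation $s\comp t^{-1}$ of $\Ga_0$ decomposes into cycles, yielding the disjoint union of cyclic quivers; your remark that finiteness rules out a bi-infinite line is a worthwhile point the paper does not spell out.
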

\begin{proof}
The  maps $s$ and $t$
are bijective if and only if at each vertex there is precisely one
arrow starting and one arrow ending.
\end{proof}

\begin{prop}
Let $R(f)$ be a regular representation and suppose that the maps given by
multiplication with $x$ and $y$ are bijective. Then the quiver
$|R(f)|$ is a disjoint union of cyclic quivers.
\end{prop}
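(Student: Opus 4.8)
The plan is to apply Lemma~\ref{le:cyclic} directly to the quiver $|R(f)|$. That lemma reduces the problem to showing that the two underlying set maps of $R(f)$ — namely multiplication with $x$ and multiplication with $y$ on the vector space $V_{n-1}$, with target $V_n/\langle f\rangle$ — are bijective as maps of sets. But the forgetful functor $|-|$ does not change the underlying maps at all: $|R(f)|$ has the same maps, merely regarded set-theoretically. So if multiplication with $x$ and $y$ are bijective $k$-linear maps (which is exactly the hypothesis), then $|x|$ and $|y|$ are bijective maps of finite sets, and Lemma~\ref{le:cyclic} immediately gives that $|R(f)|$ is a disjoint union of cyclic quivers.

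**First I would** note explicitly that $\dim V_{n-1} = \dim(V_n/\langle f\rangle) = n$, so that the linear maps in question are endomorphisms of an $n$-dimensional space up to the identification, and bijectivity of linear maps is equivalent to bijectivity of the underlying maps of sets. **Then** I would invoke the hypothesis that both multiplication-with-$x$ and multiplication-with-$y$ maps $V_{n-1}\to V_n/\langle f\rangle$ are bijective. **Finally** I would apply Lemma~\ref{le:cyclic} to $\Ga = |R(f)|$, whose source and target maps are precisely $|x|$ and $|y|$, to conclude.

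**The main (and only) obstacle** is essentially bookkeeping: one should be slightly careful that the forgetful functor genuinely leaves the structure maps untouched, so that "bijective linear map" and "bijective map of underlying sets" coincide here — this is immediate since $|-|$ is faithful and a bijective morphism in $\vec_k$ has a two-sided inverse whose image under $|-|$ is a two-sided inverse in $\set$. There is no real computation to grind through; the content of the proposition is simply the translation of the linear hypothesis through the (identity-on-morphisms) forgetful functor, followed by the combinatorial Lemma~\ref{le:cyclic}. If one wishes, one can add a sentence describing the cycles concretely: the cyclic components of $|R(f)|$ are the orbits of the permutation $|y|^{-1}\comp|x|$ of the finite set $|V_{n-1}|$, but this is optional color rather than part of the proof.
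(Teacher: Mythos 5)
Your proposal is correct and follows exactly the paper's route: the paper's entire proof is to apply Lemma~\ref{le:cyclic}, since the forgetful functor leaves the structure maps (start $=$ multiplication by $x$, terminus $=$ multiplication by $y$) untouched and bijective linear maps are bijective on underlying sets. Your additional bookkeeping remarks only make explicit what the paper leaves implicit.
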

\begin{proof}
Apply Lemma~\ref{le:cyclic}.
\end{proof}

\begin{exm}
Let $a\in k^\times$ and consider the representation
\[\begin{tikzcd}
k\arrow[yshift=0.75ex]{r}{1} \arrow[yshift=-0.75ex,swap]{r}{a}& k
\end{tikzcd}\]
which is isomorphic to $R(ax-y)$. For $k=\bbF_5$ and $a=4$ the
corresponding quiver is the following:
\[
\begin{tikzcd}
0\ar[loop right]{r}{0}&& 1 \arrow[yshift=0.75ex]{r}{1}&4
\arrow[yshift=-0.75ex]{l}{4}
&& 2 \arrow[yshift=0.75ex]{r}{2}&3 \arrow[yshift=-0.75ex]{l}{3}
\end{tikzcd}
\]
For $k=\bbF_5$ and $a=2$ one obtains the following:
\[
\begin{tikzcd}
0\ar[loop right]{r}{0}&& 1 \arrow{r}{1}&2 \arrow{r}{2}&4 \arrow{r}{4}&3
\arrow[bend left=15]{lll}{3}
\end{tikzcd}
\]
\end{exm}

It remains to consider the case that one of the maps given by
multiplication with $x$ and $y$ is not bijective. We may assume that
this is $y$. Note that the kernel is isomorphic to $k$. The
endomorphism of $V_{n}/\langle f\rangle$ given
by multiplication with $x^{-1}y$ is nilpotent, say of index $d$. 

We consider the quiver $|R(f)|$ and set $q:=\card (k)$. Observe that at
each vertex there is precisely one arrow starting and there are either
$q$ arrows ending or none.  Given a vertex $v$ that is not the
end of an arrow, there is a unique path of length $d$ that starts at
$v$ and ends at $0$.

A quiver is called a \emph{complete directed tree of height $d$ and  width $q$}, if
\begin{enumerate}
\item there is a unique vertex (called \emph{root}) where no arrow starts;
\item at each other vertex  a unique arrow starts;
\item at each vertex  either $q$ or no arrows end; 
\item each vertex where no arrow ends is connected with the root by a
  unique path of length $d$.
\end{enumerate}

\begin{prop}\label{pr:tree}
Let $R(f)$ be an indecomposable regular representation such that
  $f=p^d$ for some irreducible polynomial $p$.  Suppose that the map
  given by multiplication with $y$ is not bijective. Then the quiver
  $|R(f)|$ has a unique loop at $0$, and after removing this loop it is
  a directed tree that is complete of height $d$ and width $\card(k)$,
  except that there are only $\card(k)-1$ arrows ending at the root $0$.\qed
\end{prop}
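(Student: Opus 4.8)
The plan is to normalise $f$ to a pure power of $y$ and then recognise $|R(f)|$ as the graph of an explicit nilpotent linear map; essentially all of the combinatorics in the four defining conditions of a complete directed tree becomes transparent once one is in this picture.

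\emph{Step 1: reduce to $f=y^d$.} Since $f=p^d$ with $p$ irreducible, $R(f)$ is indecomposable, so one of the two multiplication maps is bijective; by hypothesis it is not multiplication with $y$, hence multiplication with $x$ gives a bijection $V_{n-1}\to V_n/\langle f\rangle$. The kernel of multiplication with $y\colon V_{n-1}\to V_n/\langle f\rangle$ is nonzero (it is $\cong k$), and for $0\neq g$ in it one has $yg=\lambda f$ with $\lambda\neq 0$, so $y\mid f$. As $p$ is irreducible this forces $p$ to be a scalar multiple of $y$, whence $\langle f\rangle=\langle y^d\rangle$ and $n=\deg f=d$. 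We may therefore assume $f=y^d$; then $R(y^d)$ has underlying spaces $V_{d-1}$ and $W:=V_d/\langle y^d\rangle$, with structure maps multiplication with $x$ (bijective) and with $y$.

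\emph{Step 2: identify $|R(y^d)|$ with a functional graph.} The set $W$ and the arrow set $V_{d-1}$ each have $\card(k)^d$ elements. Since multiplication with $x$ is bijective, every arrow is determined by its source: the arrow with source $v\in W$ is the unique $a\in V_{d-1}$ with $xa=v$, and its target is $ya$. Writing $\phi$ for the endomorphism $v\mapsto ya$ of $W$ — equivalently, multiplication with $x^{-1}y$ as in the discussion preceding the statement — the quiver $|R(y^d)|$ is exactly the functional graph of $\phi$, i.e.\ it has one arrow $v\to\phi(v)$ at each vertex. In the basis $e_i:=x^{d-i}y^i+\langle y^d\rangle$ $(i=0,\dots,d-1)$ of $W$ one computes $\phi(e_i)=e_{i+1}$ for $i<d-1$ and $\phi(e_{d-1})=0$, so $\phi$ is a single nilpotent Jordan block of size $d$; in particular $\phi^d=0$, $\phi^{d-1}\neq 0$, and $\Im\phi=\Ker\phi^{d-1}=\operatorname{span}(e_1,\dots,e_{d-1})$.

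\emph{Step 3: read off the tree structure.} The only fixed point of $\phi$ is $0$, so $|R(y^d)|$ has a unique loop, located at $0$; remove it. After this $0$ is the only vertex from which no arrow starts, and every other vertex emits exactly one arrow, giving conditions (1) and (2) with root $0$. The arrows into a vertex $v$ are indexed by $\phi^{-1}(v)$, which is empty when $v\notin\Im\phi$ and is a coset of $\Ker\phi$ — hence of cardinality $\card(k)$ — otherwise; this is condition (3), and at $v=0$ the deleted loop accounts for one of the $\card(k)$ preimages of $0$, leaving exactly $\card(k)-1$ incoming arrows. If no arrow ends at $v$, then $v\notin\Im\phi=\Ker\phi^{d-1}$, so $\phi^{d-1}(v)\neq 0=\phi^d(v)$ and $v,\phi(v),\dots,\phi^{d-1}(v)=0$ is a path of length $d$ from $v$ to the root; it is the only walk from $v$ to $0$ because each vertex has a single outgoing arrow, which is condition (4). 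Finally the loop-free quiver is connected, since iterating $\phi$ links every vertex to $0$, and it has $\card(k)^d$ vertices and $\card(k)^d-1$ arrows, so it is a tree.

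\emph{Where the work is.} The one substantive point is that $\phi$ is a \emph{single} Jordan block, equivalently that $\Im\phi=\Ker\phi^{d-1}$. This is exactly what makes ``no arrow ends at $v$'' equivalent to ``$v$ lies at distance $d$ from the root'', so that the tree is complete of height precisely $d$; it rests on the numerical coincidence $\dim W=n=d$ after the normalisation $f=y^d$, together with the explicit action of $\phi$ on the $e_i$. Everything else is bookkeeping against the four defining conditions of a complete directed tree.
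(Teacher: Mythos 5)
Your proof is correct and takes essentially the same route as the paper, whose justification is the discussion preceding the statement: view $|R(f)|$ as the functional graph of the nilpotent endomorphism $x^{-1}y$ of $V_n/\langle f\rangle$ and read off the four conditions; your normalisation to $f=y^d$ and the explicit Jordan-block computation merely make precise why the nilpotency index is exactly $d$ and why $\Im\phi=\Ker\phi^{d-1}$. Only a harmless slip in Step 3: the path should be written $v,\phi(v),\dots,\phi^{d}(v)=0$ (of length $d$), not ending at $\phi^{d-1}(v)$, which you had just shown to be nonzero.
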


If $R(f)$ is an indecomposable regular representation such that
multiplication with $x$ is not bijective, then one obtains  $|R(f)|$  from
$|R(f')|$ by reversing all arrows, where $f'$ is obtained from $f$ by
interchanging $x$ and $y$.

A quiver that is isomorphic or anti-isomorphic to one arising in
Proposition~\ref{pr:tree} is called \emph{almost complete directed
  tree of width $q$}.

\begin{exm}
Consider the representation
\[\begin{tikzcd}[ampersand replacement=\&]
k^2\arrow[yshift=0.75ex]{rr}{\smatrix{1&0\\ 0&1}}
\arrow[yshift=-0.75ex,swap]{rr}{\smatrix{0&0\\ 1&0}} \&\& k^2
\end{tikzcd}\]
which is isomorphic to $R(y^2)$. For $k=\bbF_3$ the corresponding
quiver is the following:
\[\begin{tikzcd}[column sep=small]
(1,0)\ar[dr]&(1,1)\ar[d]&(1,2)\ar[dl]&&(2,0)\ar[dr]&(2,1)\ar[d]&(2,2)\ar[dl]\\
{}&(0,1) \ar[drr]&&&&(0,2) \ar[dll]\\
{}&&&(0,0)\ar[loop below]{r}
\end{tikzcd}\]
\end{exm}

\section{The forgetful functor and its adjoint}

Let $k$ be a finite field with $q$ elements. Composing the forgetful
functor and its left adjoint yields for any quiver $\Ga$ a natural
monomorphism $\Ga \to |k^{(\Ga)}|$.  Combining this with the
description of the quivers arising from indecomposable linear
Kronecker representations, we obtain the following result about finite
quivers.

\begin{cor}
Every finite quiver embeds naturally into a product of quivers whose
connected components belong to the following list:
\begin{enumerate} 
\item linear quivers of type $\A_n$ with $n >0$,
\item cyclic quivers of type $\C_n$ with $n >0$,
\item de Bruijn graphs of dimension $n\ge 0$ on a set with $q$
  elements,
\item almost complete directed trees of width $q$.
\end{enumerate}
\end{cor}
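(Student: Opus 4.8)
The plan is to unwind the sentence that precedes the corollary. For a quiver $\Ga$, unit of the adjunction $(k^{(-)},|-|)$ gives a natural map $\Ga\to|k^{(\Ga)}|$; I would first check that this map is a monomorphism of quivers, i.e.\ injective on vertices and on arrows. Injectivity on vertices is the statement that the unit $X\to|k^{(X)}|$ of the vector-space adjunction is injective for every set $X$, which is clear since $x\mapsto e_x$ embeds $X$ as the standard basis vectors; injectivity on arrows is the same statement applied to $\Ga_1$, and compatibility with $s,t$ is naturality. So every finite quiver embeds naturally into $|k^{(\Ga)}|$, and it remains to identify the right-hand side.

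Next I would apply the structure results of the previous sections to $k^{(\Ga)}$. Since $\vec_k^{\K_2}$ has the Krull--Remak--Schmidt property, $k^{(\Ga)}$ decomposes as a finite direct sum of indecomposable linear Kronecker representations, each of which is (up to isomorphism) one of $P(n)$ ($n\ge 0$), $I(n)$ ($n\ge 0$), or an indecomposable regular $R(p^d)$. The forgetful functor $|-|$ is a right adjoint, hence preserves products; a finite direct sum in $\vec_k^{\K_2}$ is a finite product, so $|k^{(\Ga)}|$ is the product (in $\set^{\K_2}$, i.e.\ the componentwise product of quivers) of the quivers $|P(n)|$, $|I(n)|$, $|R(p^d)|$ occurring. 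I would then quote the identifications already established: by Proposition on $|P(n)|$ (and its Corollary) each $|P(n)|$ is a disjoint union of linear quivers $\Ga(f)$, hence of type $\A_m$ for various $m>0$ (the $f=0$ component is the one-loop quiver, which is $\C_1$, so it falls under item (2) — I should make sure the statement's ``$n>0$'' conventions absorb the degenerate loop, or note it explicitly); by the Proposition on $|I(n)|$ each $|I(n)|$ is the de Bruijn graph of dimension $n$ on the $q$-element set $k$, giving item (3); and by the Propositions on regular representations, $|R(p^d)|$ is either a disjoint union of cyclic quivers $\C_m$ (when both multiplications are bijective) — item (2) — or, by Proposition~\ref{pr:tree} together with the arrow-reversal remark, an almost complete directed tree of width $q$ — item (4). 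Assembling these, $|k^{(\Ga)}|$ has all connected components on the stated list, and $\Ga$ embeds naturally into it.

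I expect the main obstacle to be purely bookkeeping rather than conceptual: matching the degenerate/small cases against the four items of the list (the single loop $\C_1$, the trivial de Bruijn graph of dimension $0$, a possible empty quiver), and being careful that ``product of quivers'' is the right categorical product in $\set^{\K_2}$ so that the componentwise description is legitimate — in a product of quivers the connected components need \emph{not} be products of the components of the factors, so one must argue at the level of the ambient product quiver, not its components. The safe formulation, which I would adopt, is: $\Ga$ embeds into the product quiver $\prod_i Q_i$ where each $Q_i$ is one of $|P(n)|,|I(n)|,|R(p^d)|$, and each such $Q_i$ is a disjoint union of quivers of the four listed types; this is exactly what the corollary asserts. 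The naturality of the embedding is immediate from naturality of the adjunction unit, so no extra work is needed there.
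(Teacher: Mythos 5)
Your argument is correct and follows the paper's own route: the unit $\Ga\to|k^{(\Ga)}|$ is a natural monomorphism, $k^{(\Ga)}$ decomposes into indecomposables by Krull--Remak--Schmidt, direct sums are products in $\vec_k^{\K_2}$, the right adjoint $|-|$ preserves products, and the earlier propositions identify $|P(n)|$, $|I(n)|$, $|R(p^d)|$ with the four listed types. The extra bookkeeping you supply (injectivity of the unit, the degenerate loop and empty components, and the caveat that components of a product are not products of components) is a welcome elaboration of details the paper leaves implicit, but it is not a different proof.
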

\begin{proof}
  Let $\Ga$ be a finite quiver. Then the linear representation
  $k^{(\Ga)}$ decomposes into a finite direct sum of indecomposable
  representations by the Krull-Remak-Schmidt theorem. It remains to
  observe that direct sums are products in the category of linear
  representations and that the forgetful functor preserves products.
\end{proof}

\section{Linearisation}

Fix a  field $k$. We consider the functors 
\[\Set\lto \Vec_k,\quad X\mapsto k^{(X)},
\qquad\text{and}\qquad
\Set^\op\lto \Vec_k,\quad X\mapsto k^{X}.\]
These functors induce functors between the categories of set valued
and linear Kronecker representations.

%Recall that $V*=\Hom_k(V,k)$ for a vector space $V$.
 
\begin{lem}\label{le:duality}
For any set $X$ we have a natural isomorphism $k^X\cong \Hom_k(k^{(X)},k)$.
\end{lem}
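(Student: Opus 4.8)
The plan is to exhibit the isomorphism directly on basis elements and check naturality by hand. For a set $X$, the vector space $k^{(X)}$ has the indicator functions $e_x$ (for $x\in X$) as a $k$-basis, where $e_x(y)=1$ if $y=x$ and $0$ otherwise. Hence $\Hom_k(k^{(X)},k)$ is exactly the space of functions on the basis, i.e. the space of all maps $X\to k$, which is $k^X$ by definition. Concretely, I would define the map $k^X\to\Hom_k(k^{(X)},k)$ by sending $\varphi\colon X\to k$ to the linear form $k^{(X)}\to k$, $e_x\mapsto\varphi(x)$, extended linearly; explicitly this form sends $g\in k^{(X)}$ to $\sum_{x\in X}g(x)\varphi(x)$, a finite sum since $g$ has finite support. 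The inverse sends a linear form $\lambda$ to the function $x\mapsto\lambda(e_x)$.

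The only things to verify are that this assignment is $k$-linear in $\varphi$ (immediate, since $(\varphi+\psi)(x)=\varphi(x)+\psi(x)$ and scalars pull out), that the two maps are mutually inverse (one composite evaluates $\lambda$ on all basis elements, the other reads off $\varphi$ on all points — both identities), and that the construction is natural in $X$. For naturality, given a map of sets $u\colon X\to Y$, one has the induced linear map $k^{(u)}\colon k^{(X)}\to k^{(Y)}$ sending $e_x\mapsto e_{u(x)}$, the contravariant map $k^u\colon k^Y\to k^X$ given by precomposition with $u$, and the contravariant map $\Hom_k(k^{(u)},k)\colon\Hom_k(k^{(Y)},k)\to\Hom_k(k^{(X)},k)$ given by precomposition with $k^{(u)}$. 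One checks on a function $\varphi\colon Y\to k$ that going around either way produces the linear form $e_x\mapsto\varphi(u(x))$, so the square commutes.

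There is really no serious obstacle here; the statement is a standard adjunction/duality identity and the proof is a basis-level bookkeeping argument. The one point that deserves a sentence of care is the role of finiteness: the pairing $\langle g,\varphi\rangle=\sum_x g(x)\varphi(x)$ is only well defined because $g\in k^{(X)}$ has finite support, so the sum is finite — this is exactly why $k^X$ (not $k^{(X)}$) appears as the dual. I would state the isomorphism, observe it restricts to the evident bijection on basis vectors versus arbitrary functions, and close with the naturality square, leaving the routine verifications to the reader.
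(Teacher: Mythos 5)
Your proof is correct, but it takes a different route from the paper. You construct the isomorphism explicitly: the evaluation pairing $\langle g,\varphi\rangle=\sum_x g(x)\varphi(x)$, well defined thanks to the finite support of $g\in k^{(X)}$, gives mutually inverse maps between $k^X$ and $\Hom_k(k^{(X)},k)$, and you verify naturality in $X$ on basis elements. The paper instead argues categorically: both assignments $X\mapsto k^X$ and $X\mapsto \Hom_k(k^{(X)},k)$ are contravariant functors that visibly agree on finite sets, and both send coproducts of sets to products of vector spaces (since $k^{(-)}$ sends coproducts to direct sums and $\Hom_k(-,k)$ sends direct sums to products); as every set is a coproduct of finite sets, the two functors agree naturally. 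Your approach buys an explicit formula for the isomorphism and makes the role of finite supports transparent, at the cost of hand-checking linearity, invertibility and the naturality square; the paper's approach dispenses with all element-level bookkeeping but relies on recognizing the colimit-to-limit behaviour of the two functors. Both are complete proofs of the statement.
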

\begin{proof}
Both functors agree on finite sets and preserve colimits. It remains to
observe that any set is a coproduct of finite sets.
\end{proof}

Let us consider the linear representations corresponding to the linear
quiver $\A_n$.

\begin{prop}
  Let $n\ge 0$. Then we have
  \[k^{(\A_{n+1})}\cong P(n)\qquad \text{and}\qquad k^{\A_{n+1}}\cong
  I(n).\]
\end{prop}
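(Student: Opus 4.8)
The plan is to make both linear representations explicit and to match their bases against monomials. Regard $\A_{n+1}$ as the quiver with vertex set $\{1,\dots,n+1\}$ and arrow set $\{1,\dots,n\}$, the $i$-th arrow having start $i$ and terminus $i+1$. Then $k^{(\A_{n+1})}$ is the linear representation with underlying maps $k^n\to k^{n+1}$ given by the two $k$-linear maps that send the basis vector $e_i$ indexed by the $i$-th arrow to the basis vectors $\epsilon_i$ and $\epsilon_{i+1}$ indexed by its start and its terminus. First I would identify this with $P(n)$, i.e.\ with the pair $x,y\colon V_{n-1}\to V_n$, by sending $\epsilon_j\mapsto x^{n+1-j}y^{j-1}\in V_n$ and $e_i\mapsto x^{n-i}y^{i-1}\in V_{n-1}$. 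Both are bijections onto the monomial bases, and multiplication by $x$ sends $x^{n-i}y^{i-1}$ to $x^{n+1-i}y^{i-1}$ while multiplication by $y$ sends it to $x^{n-i}y^{i}$; so these identifications are compatible with both structure maps (the one corresponding to the start map going to $x$, the one corresponding to the terminus map going to $y$), giving an isomorphism $k^{(\A_{n+1})}\cong P(n)$.

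For the preinjective statement I would not repeat this, but deduce it by duality. By definition $I(n)$ is obtained from $P(n)$ by applying $\Hom_k(-,k)$ to each space and to each of the two maps; that is, $I(n)$ is the image of $P(n)$ under the duality of $\vec_k^{\K_2}$ which sends a representation with maps $f,g\colon X_1\to X_0$ to the one with maps $f^*,g^*\colon X_0^*\to X_1^*$ --- this is the duality $\Hom_k(-,k)$ of $\vec_k$ together with the isomorphism $\K_2\cong\K_2^{\op}$ interchanging the two vertices. On the other hand, Lemma~\ref{le:duality}, read with its naturality, says exactly that the functor $\Gamma\mapsto k^{\Gamma}$ on Kronecker representations is the composite of $\Gamma\mapsto k^{(\Gamma)}$ with this same duality: on a set $X$ it is the isomorphism $k^X\cong\Hom_k(k^{(X)},k)$, and on a map of sets it turns the linearisation into its transpose, which is the restriction map. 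Hence $k^{\A_{n+1}}$ is the dual of $k^{(\A_{n+1})}$, and by the first part this is the dual of $P(n)$, namely $I(n)$.

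There is no real obstacle here: everything is bookkeeping. The one point requiring care is to fix once and for all the correspondence between the two structure maps of $\A_{n+1}$ and multiplication by $x$ and by $y$ (and, dually, the dual basis vectors $\xi,\upsilon$ of $V_n^*$), so that the identifications chosen at the two vertices are mutually compatible. As an alternative to the duality argument one could prove the second isomorphism by the same kind of explicit matching as in the first part, using the formulas for $x^*$ and $y^*$ on the dual basis recorded in the proof of the proposition on preinjective representations above; the duality argument is merely shorter.
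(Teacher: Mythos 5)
Your argument is correct and is essentially the paper's own proof: the first isomorphism is obtained by identifying the vertices and arrows of $\A_{n+1}$ with the monomial bases of $V_n$ and $V_{n-1}$ so that the start and terminus maps become multiplication by $x$ and $y$, and the second isomorphism is deduced from the first via the natural isomorphism $k^X\cong\Hom_k(k^{(X)},k)$ of Lemma~\ref{le:duality}. The only difference is cosmetic bookkeeping (explicit indices and a spelled-out account of how the duality interacts with the two structure maps), which the paper leaves implicit.
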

\begin{proof}
  For any $m\ge 0$ we consider the basis $\{x^iy^j\mid i+j=m\}$ of
  $V_m$.  The linear quiver $\A_{n+1}$ identifies with
\[\begin{tikzcd}
\{x^iy^j\mid i+j=n-1\}\arrow[yshift=0.75ex]{rr}{x}
\arrow[yshift=-0.75ex,swap]{rr}{y} && \{x^iy^j\mid i+j=n\}
\end{tikzcd}\]
and this yields the isomorphism $k^{(\A_{n+1})}\xto{\sim} P(n)$. The
second isomorphism follows from the first with Lemma~\ref{le:duality}.
\end{proof}

Next consider the representation corresponding to the cyclic quiver
$\C_n$. It is easily seen that $k^{(\C_n)}\cong k^{\C_n}$ is
isomorphic to 
the regular representation
\[\begin{tikzcd}[ampersand replacement=\&]
k^n\arrow[yshift=0.75ex]{rr}{I_n}
\arrow[yshift=-0.75ex,swap]{rr}{\smatrix{0 & 1 \\
I_{n-1} & 0}} \&\& k^n
\end{tikzcd}\]
which is isomorphic to $R(y^n-1)$. Let $p$ denote the characteristic
of $k$ and write $n=p^cm$ with $m$ coprime to $p$. Then there are
pairwise coprime irreducible polynomials $f_1,\ldots,f_t$ such that
$y^n-1=(y^m-1)^{p^c}=f_1^{p^c}\cdots f_t^{p^c}$. This yields a
decomposition of  $R(y^n-1)$.

\begin{prop}
  Let $n > 0$. Then we have the following decomposition into
  indecomposable regular representations:
\[k^{(\C_n)}\cong R(f_1^{p^c})\oplus\ldots \oplus R(f_t^{p^c})\]
\end{prop}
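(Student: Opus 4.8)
The strategy is to assemble the decomposition from three facts already in hand: the identification $k^{(\C_n)}\cong R(y^n-1)$ recorded just above, the splitting $R(fg)\cong R(f)\oplus R(g)$ for coprime homogeneous $f,g$, and the indecomposability of $R(p^d)$ for an irreducible polynomial $p$ and an integer $d\ge 1$.

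First I would fix the conventions. Throughout, $y^n-1$ is to be read as the homogeneous polynomial $y^n-x^n\in V_n$, so that $R(y^n-1)$ is a genuine regular representation, and with this reading $k^{(\C_n)}\cong R(y^n-1)$ is the isomorphism noted before the statement. Writing $n=p^cm$ with $m$ coprime to $p$ and using that $k$ has characteristic $p$, the Frobenius identity gives $y^n-x^n=(y^m-x^m)^{p^c}$. Since $m$ is coprime to $p$, the one-variable polynomial $y^m-1$ is separable — its derivative $my^{m-1}$ is coprime to it — hence a product of pairwise distinct monic irreducibles; as $x\nmid y^m-x^m$, homogenising transports this to a factorisation $y^m-x^m=f_1\cdots f_t$ into pairwise coprime homogeneous irreducibles, whence $y^n-1=f_1^{p^c}\cdots f_t^{p^c}$ with the factors $f_i^{p^c}$ again pairwise coprime. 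This is precisely the factorisation displayed in the paragraph preceding the proposition, so in fact all of this step may simply be cited.

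Next I would run the induction on $t$. As $f_1^{p^c}$ is coprime to $f_2^{p^c}\cdots f_t^{p^c}$, the relation $R(fg)\cong R(f)\oplus R(g)$ gives $R(y^n-1)\cong R(f_1^{p^c})\oplus R(f_2^{p^c}\cdots f_t^{p^c})$, and iterating on the remaining factor produces $R(y^n-1)\cong R(f_1^{p^c})\oplus\cdots\oplus R(f_t^{p^c})$. Finally each summand $R(f_i^{p^c})$ is indecomposable, since $f_i$ is irreducible and $p^c\ge 1$; this is exactly the indecomposability criterion recalled earlier. Combining this chain of isomorphisms with $k^{(\C_n)}\cong R(y^n-1)$ yields the asserted decomposition.

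There is essentially no obstacle: the proposition is a bookkeeping corollary of material already established. The only point that needs a moment's care is the homogenisation convention — that the factorisation of $y^m-1$ in $k[y]$ lifts to a factorisation of $y^m-x^m\in V_n$ into pairwise coprime homogeneous irreducibles — and even this is granted implicitly by the display $y^n-1=(y^m-1)^{p^c}=f_1^{p^c}\cdots f_t^{p^c}$ stated just above. So the only substantive thing to write down is the induction over the number of coprime factors.
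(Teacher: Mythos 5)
Your proof is correct and follows essentially the same route as the paper: the paper's proof is simply to invoke $R(fg)\cong R(f)\oplus R(g)$ for coprime homogeneous $f,g$, applied to the factorisation $y^n-1=(y^m-1)^{p^c}=f_1^{p^c}\cdots f_t^{p^c}$ already recorded before the statement, exactly as you do. Your additional remarks on the homogenisation convention and the separability of $y^m-1$ only make explicit what the paper leaves implicit.
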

\begin{proof}
Use that $R(fg)\cong R(f)\oplus R(g)$ when $f$ and $g$ are coprime
homogeneous polynomials.
\end{proof}

\section{Path algebras}

Let  $\Ring$ denote the category of associative rings with unit. Given
a Kronecker representation
\[\begin{tikzcd}
A\arrow[yshift=0.75ex]{r}{f} \arrow[yshift=-0.75ex,swap]{r}{g}& B
\end{tikzcd}\]
in $\Ring$, we view $B$ as a bimodule over $A$ with left
multiplication given by $f$ and right multiplication given by
$g$. Taking this representation to the corresponding tensor
algebra \[T_A(B)=\bigoplus_{n\ge 0}B^{\otimes n}\] yields a functor
\[T\colon \Ring^{\K_2}\lto\Ring.\]

The functor 
\[\Set^\op \times\Ring \lto \Ring,\quad (X,A)\mapsto A^X\]
 induces a functor
\[(\Set^{\K_2})^\op\times\Ring\lto\Ring^{\K_2}.\]

Given a commutative ring $A$ and a quiver $\Ga$, the \emph{path algebra}
$A[\Ga]$ is the free $A$-module with basis consisting of all paths in
$\Ga$ and with multiplication induced by concatenation.

\begin{prop}
  The composition
  \[(\Set^{\K_2})^\op\times\Ring\lto\Ring^{\K_2}\stackrel{T}\lto\Ring\]
  takes a pair $(\Ga,A)$ consisting of a finite quiver and a
  commutative ring to its path algebra $A[\Ga]$.
\end{prop}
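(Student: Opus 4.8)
The plan is to trace through the two functors explicitly and identify the resulting ring with the path algebra by exhibiting matching bases. The first step is to unwind the functor $(\Set^{\K_2})^\op \times \Ring \to \Ring^{\K_2}$ on the pair $(\Ga, A)$, where $\Ga = (\Ga_0, \Ga_1, s, t)$. Applying the functor $X \mapsto A^X$ to the set valued Kronecker representation $\Ga_1 \rightrightarrows \Ga_0$ (with the two maps $s$ and $t$, and noting the variance: $\Set^\op$ means we get arrows going the other way) yields the Kronecker representation in $\Ring$
\[\begin{tikzcd}
A^{\Ga_0}\arrow[yshift=0.75ex]{r}{s^*} \arrow[yshift=-0.75ex,swap]{r}{t^*}& A^{\Ga_1}
\end{tikzcd}\]
where $s^*, t^* \colon A^{\Ga_0} \to A^{\Ga_1}$ are the restriction maps $(\lambda_v)_{v\in\Ga_0} \mapsto (\lambda_{s(\a)})_{\a\in\Ga_1}$ and $(\lambda_v)_{v\in\Ga_0}\mapsto (\lambda_{t(\a)})_{\a\in\Ga_1}$.

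Next I would apply $T$ to this representation. Here $A^{\Ga_0}$ plays the role of the base ring and $B := A^{\Ga_1}$ is the bimodule, with left action through $s^*$ and right action through $t^*$; that is, for $b = (b_\a)_\a \in A^{\Ga_1}$ and $\lambda = (\lambda_v)_v \in A^{\Ga_0}$ we have $(\lambda \cdot b)_\a = \lambda_{s(\a)} b_\a$ and $(b \cdot \lambda)_\a = b_\a \lambda_{t(\a)}$. The tensor algebra is $T_{A^{\Ga_0}}(B) = \bigoplus_{n\ge 0} B^{\otimes_{A^{\Ga_0}} n}$. The key computation is that the $n$-th tensor power has an $A$-basis indexed by composable strings of arrows: writing $e_v \in A^{\Ga_0}$ for the idempotent supported at $v$ and $\d_\a \in A^{\Ga_1}$ for the element supported at $\a$, one checks that $\d_\a \otimes \d_\b = 0$ unless $t(\a) = s(\b)$ (because $\d_\a \cdot \lambda \otimes \d_\b = \d_\a \otimes \lambda \cdot \d_\b$ forces the middle idempotent to match), and that $B^{\otimes n}$ is free over $A$ with basis $\{\d_{\a_1}\otimes\cdots\otimes\d_{\a_n} \mid t(\a_i) = s(\a_{i+1})\}$, i.e. the set of paths of length $n$. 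The degree-zero part $B^{\otimes 0} = A^{\Ga_0}$ contributes the trivial paths $e_v$, which are the paths of length $0$.

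The final step is to verify that the multiplication agrees. In $T_{A^{\Ga_0}}(B)$ the product is concatenation of tensors followed by reduction modulo the bimodule relations, which on basis elements gives exactly: $(\d_{\a_1}\otimes\cdots)\cdot(\d_{\b_1}\otimes\cdots) = \d_{\a_1}\otimes\cdots\otimes\d_{\b_1}\otimes\cdots$ if the endpoint of the first path equals the start of the second, and $0$ otherwise — and concatenation of $e_v$ with a path acts as the appropriate source/target idempotent. This is precisely the multiplication defining $A[\Ga]$, so the $A$-linear bijection on bases is a ring isomorphism; naturality in $(\Ga, A)$ is then routine. \textbf{The main obstacle} I anticipate is purely bookkeeping: getting the variance and the left/right bimodule conventions consistent so that ``$t(\a_i) = s(\a_{i+1})$'' (rather than its reverse) really is the condition that survives in the tensor product, and confirming that $B$ is flat — in fact free — as a one-sided $A^{\Ga_0}$-module so that the tensor powers behave as claimed; this freeness holds because $A^{\Ga_1} \cong \bigoplus_{\a} A^{\Ga_1}e_{t(\a)}$ as a right module over $A^{\Ga_0} \cong \prod_v A$, each summand being a copy of $A$.
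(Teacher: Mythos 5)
Your argument is correct and is essentially the paper's proof written out in detail: the paper simply asserts that $A[\Ga]$ identifies with the tensor algebra $T_{A^{\Ga_0}}(B)$ with $B$ the free $A$-module on $\Ga_1$, and you verify exactly this identification by computing the bimodule structure via $s^*,t^*$ and matching the basis of $B^{\otimes n}$ with paths of length $n$. One small inaccuracy in your closing remark: $A^{\Ga_1}$ is in general only projective, not free, as a one-sided $A^{\Ga_0}$-module (it is a direct sum of summands of the form $e_vA^{\Ga_0}$, and your displayed decomposition should read $\bigoplus_\a \d_\a A^{\Ga_0}$), but your explicit basis computation never uses freeness, so nothing is affected.
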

\begin{proof}
  Let $\Ga=(\Ga_0,\Ga_1,s,t)$ be a finite quiver. The path algebra
  $A[\Ga]$ identifies with the tensor algebra $T_{A^{\Ga_0}}(B)$ where
  $B$ is the free $A$-module with basis $\Ga_1$.
\end{proof}


\begin{thebibliography}{99}

\bibitem{dB1946} N. G. de Bruijn, A combinatorial problem,
  Nederl. Akad. Wetensch., Proc. {\bf 49} (1946), 758--764 =
  Indagationes Math. {\bf 8}, 461--467 (1946).

\bibitem{Ga1972} P. Gabriel, Unzerlegbare Darstellungen. I,
  Manuscripta Math. {\bf 6} (1972), 71--103; correction, ibid. {\bf 6}
  (1972), 309.

\bibitem{Kr1890} L. Kronecker, Algebraische Reduktion der Scharen
  bilinearer Formen, Sitzungsber. Akad. Berlin (1890), 763--776.

\end{thebibliography}
\end{document}